\author{M. Fuchs, R. Hornung, R. De Bin, A.-L. Boulesteix}
\title{A $U$-statistic estimator for the variance of resampling-based error estimators}
\address{Institut für Medizinische Informationsverarbeitung Biometrie und Epidemiologie, Ludwig-Maximilians-Universität München,\\ Marchioninistr. 15, 81377 München, Germany}
\email{\{fuchs,hornung,debin,boulesteix\}@ibe.med.uni-muenchen.de}
\newtheorem{theorem}{Theorem}[section]
\newtheorem{proposition}[theorem]{Proposition}
\newtheorem{corollary}[theorem]{Corollary}
\theoremstyle{definition}
\newtheorem{definition}[theorem]{Definition}
\newtheorem{assumption}[theorem]{Assumption}
\theoremstyle{remark}
\newtheorem{remark}[theorem]{Remark}
\newcommand{\R}{{\mathbb{R}}}
\newcommand{\abs}[1]{\left|#1\right|}
\newcommand{\floor}[1]{\lfloor #1\rfloor}
\newcommand{\oParam}{\Delta}
\newcommand{\U}{{\widehat{\oParam}}}
\newcommand{\Uincompl}{{\widetilde{\oParam}}}
\newcommand{\defequal}{:=}
\newcommand{\E}[2]{{\mathbb{E}}_{#1}(#2)}
\newcommand{\V}[2]{{\mathbb{V}}_{#1}(#2)}
\newcommand{\Cov}[2]{{\textrm{cov}}_{#1}(#2)}
\newcommand{\z}{Z}
\newcommand{\VarEst}{\hat{v}}
\newcommand{\wVU}{\widehat{w}}
\begin{document}

%% Here are the title, author names and addresses

\begin{abstract}
We revisit resampling procedures for error estimation in binary classification in terms of $U$-statistics.
In particular, we exploit the fact that the error rate estimator involving all learning-testing splits is a $U$-statistic.
Therefore, several standard theorems on properties of $U$-statistics apply.
In particular, it has minimal variance among all unbiased estimators and is asymptotically normally distributed.
Moreover, there is an unbiased estimator for this minimal variance if the total sample size is at least the double learning set size plus two.
In this case, we exhibit such an estimator which is another $U$-statistic.
It enjoys, again, various optimality properties and yields an asymptotically exact hypothesis test of the equality of error rates when two learning algorithms are compared.
Our statements apply to any deterministic learning algorithms under weak non-degeneracy assumptions.
In an application to tuning parameter choice in lasso regression on a gene expression data set, the test does not reject the null hypothesis of equal rates between two different parameters.
\end{abstract}
\maketitle
\keywords{Unbiased Estimator; Penalized Regression Model; $U$-Statistic; Cross-Validation; Machine Learning;}

\section{Introduction}\label{Intro}
The goal of supervised statistical learning is to develop prediction rules taking the values of predictor variables as input and returning a predicted value of the response variable. A prediction rule is typically learnt by applying a learning algorithm $M$ to a so-called learning data set. A typical example in biomedical research is the prediction of patient outcome (e.g.\ recidive/no recidive within five years, tumor class, lymph node status, response to chemotherapy, etc.) based on bio-markers such as, e.g., gene expression data. The practitioners are usually interested in the accuracy of the prediction rule learnt from their data set to predict future patients, while methodological researchers rather want to know whether the learning algorithm is good at learning accurate prediction rules for different data sets drawn from a distribution of interest. The first perspective is called ``conditional'' (since referring to a specific data set) while the latter, which we take in this paper, is denoted as ``unconditional''. Precisely, this paper focuses on the parameter defined as the difference between the unconditional errors of two learning algorithms of interest, $M$ and $M'$, for binary classification.

% Hier sollten wir lieber von Differenzen reden, und dann müssen wir doch 
If the data set is very large, one can observe independent realizations of estimators of the unconditional error rates and use them for a paired $t$-test (see Section~\ref{tte}). In practise, however, huge data sets are rarely available. Prediction errors are thus usually estimated by resampling procedures consisting of splitting the available data set into learning and test sets a large number of times and averaging the estimated error over these iterations. The well-known cross-validation procedure can be seen as a special case of resampling procedure for error estimation. A detailed overview of the vast literature on cross-validation would go beyond the scope of this paper. The reader is referred to~\citet{Arlot:2010} for a comprehensive survey.

Having estimated the error rate, it is typically of interest to test the null hypothesis of equal error rates between learning algorithms. This requires insight into the estimator's variance. Resampling-based error estimators typically have a very large variance, in particular in the case of small samples or high-dimensional predictor space \citep{Dougherty:2011}. The estimation of this variance has been the focus of a large body of literature, especially in the machine learning context. A good estimation of the variance of resampling-based error estimators would allow to, e.g., derive reliable confidence intervals for the true error or to construct statistical tests to compare the performance of several learning algorithms. The latter task is of crucial importance in practise, since applied computational scientists including biostatisticians often have to make a choice within a multitude of different learning algorithms whose performance in the considered settings is poorly explored. In \citet{Wiel:2009}, for each splitting in repeated sub-sampling the predictions of the two classifiers are compared by a Wilcoxon-test, and the resulting $p$-values are combined. In \citet{Jiang:2008}, the authors show the asymptotic normality of the error rate estimator in the case of a support vector machine. In \citet{Jiang:2008:2}, a bias-corrected bootstrap-estimator for the error rate from leave-one-out cross-validation is introduced.

Various estimators of the variance of resampling-based error estimators have been suggested in the literature \citep{Dietterich:1998,Nadeau:2003}, most of them based on critical simplifying assumptions. As far as cross-validation error estimates are concerned, \citet{Bengio:2003} show that there exists no unbiased estimator of their variance. To date, the estimation of the variance of resampling-based error rates in general remains a challenging issue with no adequate answer yet both from a theoretical and practical point of view. In particular, there are no exact nor even asymptotically exact test procedures for testing equality of error rates between learning algorithms available. The present paper shows that there is an asymptotically exact test for the comparison of learing algorithms by using and extending results from $U$-statistics theory.

Despite the large body of literature, there seems to be no explicit treatment of the asymptotic properties of the estimators in general. Our main results are Theorem~\ref{MainTheorem}, stating that there is an unbiased estimator of the difference estimator's variance if $n\ge 2g + 2$, where $g$ is the learning set size and $n$ is the sample size, and Theorem~\ref{AA}, providing the central limit theorem for the studentized statistic as it is needed for testing. The use of only half the sample size for learning has already occurred in the literature, in a roughly similar context and on grounds of intuitive reasoning \citep[Section 10.2.1]{Buehlmann:2011}. 

Corollary~\ref{fast} gives an explicit bound on the number of iterations necessary to approximate the leave-$p$-out estimator, i.e. the minimum variance estimator, to an arbitrary given precision, where $p = n-g$; we show that this minimal variance can be estimated by an unbiased estimator, namely that from Definition~\ref{MainDefi}. It has minimal variance itself, and the ensuing studentized test in~\eqref{test} is asymptotically exact. This shows that it is not necessary to endeavour in determining the distribution of combinations of $p$-values to test equality of error rates, as in \citet[Section 2.3]{Wiel:2009}.

Section~\ref{Defs} recalls important definitions pertaining to $U$-statistics and cross-validation viewed as incomplete $U$-statistics. We show that the procedure which involves all learning-testing splits is then a complete $U$-statistic. In Section~\ref{VarEstimator}, we show that $U$-statistics theory naturally suggests an unbiased estimator of the variance of this estimated difference of errors as soon as the sample size criterion is satisfied.
%Furthermore, we show why in the remaining cases no unbiased estimator of the variance exists.
In Section~\ref{Testing}, we exploit this variance estimator to derive an asymptotically exact hypothesis test of equality of the true errors of two learning algorithms $M$ and $M'$. Section~\ref{Practice} addresses numerical computation of approximations of the leave-$p$-out cross-validation estimator, while an illustration of the variance estimation and the hypothesis test through application to the choice of the penalty parameter in lasso regression is presented in Section~\ref{Results}.
\normalfont

\section{Definitions, notations and preliminaries}\label{Defs}
\subsection{Hoeffding's definition}\label{HoeffdingDef}
Since the complete error estimator that we will consider from the next section on is a $U$-statistic, we start by recalling the definition of $U$-statistics and their basic properties. In the following, the reader who is already familiar with the machine learning context may take $\Psi_0 \defequal \Phi_0$ and $m \defequal g+1$ at first, where $g$ is the learning sample size and $\Phi_0$ is as defined in~\eqref{PhiformulaSymmetrized}; however, this is not necessary and we will need other cases of the definition as well.
\begin{definition}[$U$-statistic, \cite{Hoeffding:1948}]\label{U}
  Let $({\bf \z}_i)$, $i = 1,\dotsc,n$ be independent and identically distributed $r$-dimensional random vectors with arbitrary distribution. Let $m\le n$, and let $\Psi_0: \R^{r\times m}\to \R$ be an arbitrary measurable symmetric function of $m$ vector arguments. Write $\Psi_0(S)$ as above for the well-defined value of $\Psi_0$ at those $Z_i$ with indices from $S \subset \{1, \dotsc, n\}, \abs{S} = m$. Consider the average of $\Psi_0$ in the maximal design $\mathscr{S}$ of unordered size-$m$ subsets
\begin{equation}\label{Uformula}
U = U({\bf \z}_1,\dotsc, {\bf \z}_n)= \binom{n}{m}^{-1} \sum_{S \in \mathscr{S}} \Psi_0(S).
\end{equation}
Any statistic of such a form is called a $U$-statistic.
\end{definition}
The trailing factor is the inverse of the number of summands, the cardinality $\abs{\mathscr{S}}$. So, a $U$-statistic is an unbiased estimator for the associated parameter
\begin{equation}\label{rp}
\Theta(P) = \idotsint \Psi_0({\bf z}_1, \dotsc, {\bf z}_m) dP({\bf z}_1) \dotsm dP({\bf z}_m)
\end{equation}
for a probability distribution $P$ on $\R^r$, where ${\bf z}_1, \dotsc, {\bf z}_m$ are $r$-vectors. A parameter of this form is called a regular parameter. If $m$ is the smallest number such that there exists such a symmetric function $\Psi_0$ that represents a given parameter $\Theta(P)$ in the form~\eqref{rp}, then $m$ is called the degree of $\Psi_0$ or of $\Theta(P)$, and the function $\Psi_0$ is called a kernel of $U$. Furthermore, $U$ has minimal variance among all unbiased estimators over any family of distributions $P$ containing all properly discontinuous distributions. \citet{Hoeffding:1948} shows the asymptotic normality of $U$-statistics as $n\to\infty$ and that a good number of well-known statistics such as sample mean, empirical moments, Gini coefficient, etc.\ are subsumed by the definition.

It is also possible to associate a $U$-statistic to a non-symmetric kernel $\Psi$, i.e.\ to estimate $\E{}{\Psi}$ by a $U$-statistic. The cost is to deal with $n!/(n-m)!$ summands, many more than just the binomial coefficient $n!/(m!(n-m)!)$. The symmetrization, indeed, consists in grouping all $m!$ summands involving the same unordered index set together. This writes a $U$-statistic with non-symmetric kernel $\Psi$ in the form of \citet[Section 4.4]{Hoeffding:1948} with symmetric kernel $\Psi_0$ as in \citet[Section 3.3]{Hoeffding:1948}.
\subsection{The difference of true error rates as the parameter of interest}
The goal of this section is to formalize the true error rate and to recall its nature as an expectation. Let $\mathcal{X}=\R^{r-1}, r\in \mathbb{N},$ be a fixed predictor space, and $\mathcal{Y}\subset \R$ be a space of responses. The number $r$ will be as in~\citet{Hoeffding:1948} and one would usually denote $p = r-1$. Assume there is an unknown but fixed probability distribution $P$ on $\mathcal{X}\times \mathcal{Y}\subset \R^r$ defined on the $\sigma$-algebra of Lebesgue-measurable sets. We do not require $P$ to be absolutely continuous with respect to the Lebesgue measure in order to allow for a discrete marginal distribution in $\mathcal{Y}$ as it occurs in binary classification. The distribution $P$ can be thought of as being supported on $\R^r$ instead of $\mathcal{X}\times \mathcal{Y} \subset \R^r$ only, by identifying $P$ with its push-forward image $i_*(P)$ under the inclusion $i: \mathcal{X}\times \mathcal{Y} \to \R^r$. This allows to apply \cite{Hoeffding:1948} which only describes $U$-statistics on a Euclidean space $\R^r$ to the definition and investigation of $U$-statistics on (products of) $\mathcal{X}\times \mathcal{Y}$. Let us fix a loss function $L:\mathcal{Y}\times \mathcal{Y}\to \R$. Typically, $L$ is the misclassification loss $L(y_1, y_2) = 1_{y_1 \ne y_2}$, but can be an arbitrary measurable function. Since we suppose the marginal distribution of $P$ on $\mathcal{Y}$ to be discrete, the loss function associated as done below to a learning algorithm is almost surely bounded. Therefore, all moments exist, which will be helpful throughout the paper. It is automatic for the misclassification loss. However, some of the following also work for an unbounded distribution on $\mathcal{Y}$. In that case one would typically consider losses which are not almost surely bounded, such as, for instance, the residual sum of squares loss $L(y_1, y_2) = (y_1 - y_2)^2$ or the Brier score in survival analysis. We will not work out the details of unbounded losses.

Say we are interested in the difference of error rates of classifiers learnt on a sample of size $g$. Typical choices are $g = 4n/5$ (assuming that five divides $n$) for a learning/testing sample size ratio of $4:1$ and $g = n - 1$ for leave-one-out cross-validation. We also allow for $g = 0$ in case we are interested in the performance of classification rules that were already learnt on different and fixed data. In that case, it is important that the learning data were different since otherwise there would be a problematic contradiction between simultaneously regarding the data as fixed and as being drawn from $P$. 
% The corresponding null hypothesis is called ``conditional null hypothesis'' in \citet{Boulesteix2012}. However, care has to be taken if these data are also modelled as being random from the same distribution $P$. Conditional and unconditional error are called PE and EPE in \cite{Bengio2004}. See Section~\ref{tte} for a short discussion.

Let $({\bf z}_i)_{i = 1,\dotsc ,n} = ({\bf x}_1, y_1, \dotsc, {\bf x}_n, y_n)$, where ${\bf x}_i \in \mathcal{X}$ and $y_i \in \mathcal{Y}$, and denote by
\[
f_{M}:(\mathcal{X}\times \mathcal{Y})^{\times g} \times \mathcal{X} \to \mathcal{Y}
\]
the function that maps $({\bf z}_1, \dotsc, {\bf z}_g; {\bf x}_{g + 1}) \in (\mathcal{X}\times \mathcal{Y})^{\times g} \times \mathcal{X}$ to the prediction, an element of $\mathcal{Y}$, by the learning algorithm $M$, learnt on ${\bf z}_1, \dotsc, {\bf z}_g$ and applied to ${\bf x}_{g + 1}$. We are only concerned with deterministic learning algorithms $M$, i.e.\ ones which do not involve any random component for classification. We suppose that $f_M$ is symmetric in the first $g$ entries ${\bf z}_1, \dotsc, {\bf z}_g$, i.e.\ $M$ treats all learning observations equally, and that $f_M$ is measurable with respect to the product $\sigma$-algebra. The inclusion $\mathcal{X} \times \mathcal{Y} \to \R^r$ defines an inclusion $(\mathcal{X}\times \mathcal{Y})^{\times g} \times \mathcal{X} \to \R^{rg + r - 1}$ and in order to be able to apply \cite{Hoeffding:1948} we view $f_M$ as a map on $\R^{rg + r - 1}$ by extending it by zero on $\R^{rg + r - 1} \backslash (\mathcal{X}\times \mathcal{Y})^{\times g} \times \mathcal{X}$ which is a null-set with respect to the push-forward measure $i_*(P)$. The map $f_M$ is then also measurable on $\R^{rg + r - 1}$.

Denote by $\Phi:(\mathcal{X}\times \mathcal{Y})^{g + 1}\to \R$ the function
\begin{equation}\label{Phiformula}
%  \begin{split}
\Phi({\bf z}_1, \dotsc, {\bf z}_g; {\bf z}_{g + 1}) \defequal
    L(f_M({\bf z}_1, \dotsc, {\bf z}_g; {\bf x}_{g + 1}), y_{g + 1}) - 
    L(f_{M'}({\bf z}_1, \dotsc, {\bf z}_g; {\bf x}_{g + 1}), y_{g + 1})
%  \end{split}
\end{equation}
% \begin{equation}\label{Phiformula}
%   \begin{split}
%     &\Phi({\bf z}_1, \dotsc, {\bf z}_{g + 1}) = \Phi({\bf z}_1, \dotsc, {\bf z}_g; {\bf z}_{g + 1}) \defequal\\
%     &=
%     L(f_M({\bf z}_1, \dotsc, {\bf z}_g; {\bf x}_{g + 1}), y_{g + 1}) - 
%     L(f_{M'}({\bf z}_1, \dotsc, {\bf z}_g; {\bf x}_{g + 1}), y_{g + 1})
%   \end{split}
% \end{equation}
for two learning algorithms $M$ and $M'$.
%If $M$ and $M'$ are to be made explicit, we may also write $\Phi = \Phi_{M, M'}$.
The value $\Phi({\bf z}_1, \dotsc, {\bf z}_g; {\bf z}_{g + 1})$ is the empirical difference of error rates between $M$ and $M'$, learnt on the first $g$ observations $({\bf z}_1, \dotsc, {\bf z}_g)$ of the sample $({\bf z}_i)$ and evaluated on the single last entry. The semicolon thus visually separates learning and test sets. The definition of $\Phi$ involves only a single test observation. Anticipating a little, the reason is that $\Phi$ had to be defined with a minimal number of arguments necessary for~\eqref{Exp} below. In the following, we will conveniently consider larger test sample sizes by applying the mechanism of associating a $U$-statistic to a kernel.

As noted above, we assume that $\Phi$ is almost surely bounded. This happens, for instance, for bounded loss functions such as the misclassification loss. Also, we may view $\Phi$ as being defined on $\R^{r(g+1)}$ instead of $(\mathcal{X}\times \mathcal{Y})^{g+1}$ without notational distinction, in the same way as $f_M$ was extended to $\R^{rg + r - 1}$.

The true difference of error rates between $M$ and $M'$ is the expectation of $\Phi$, taken with respect to $g+1$ independent realizations of $P$:
\begin{equation}\label{ThetaIsRegPa}
\begin{split}
\oParam & \defequal e_M - e_{M'} = \E{
%({\bf z}_1, \dotsc, {\bf z}_{g + 1})\thicksim 
P^{\otimes {(g + 1)}}}{\Phi({\bf \z}_1, \dotsc, {\bf \z}_g; {\bf \z}_{g+1})}\\
&= \idotsint_{(\mathcal{X}\times \mathcal{Y})^{\times (g +1)}}  \bigl(L(f_{M}({\bf z}_1, \dotsc, {\bf z}_g; {\bf x}_{g + 1}), y_{g + 1})\\
& - L(f_{M'}({\bf z}_1, \dotsc, {\bf z}_g; {\bf x}_{g + 1}), y_{g + 1}) \bigr) \thinspace dP({\bf z}_1) \dotsm dP({\bf z}_{g+1}),
\end{split}
\end{equation}
where both learning and test data are random. The existence of the expectation follows from measurability and from the boundedness assumption. The quantity $\oParam$ is the parameter of main interest. Also, we consider the symmetric function of $g+1$ arguments
\begin{equation}\label{PhiformulaSymmetrized}
\Phi_0({\bf z}_1, \dotsc, {\bf z}_{g + 1}) \defequal \frac{1}{g+1}\sum_{i = 1}^ {g+1}\Phi({\bf z}_1, \dotsc, {\bf z}_{i-1}, {\bf z}_{i+1}, \dotsc, {\bf z}_{g+1}; {\bf z}_i),
\end{equation}
satisfying
\begin{equation}\label{Exp}
\oParam = \E{}{\Phi} = \E{}{\Phi_0}.
\end{equation}
For the particular non-symmetric kernel $\Psi = \Phi$ introduced in~\eqref{Phiformula}, the symmetrization $\Psi_0 = \Phi_0$ of~\eqref{PhiformulaSymmetrized} can be written involving only $m = g+1$ summands instead of $m!$, in other words only cyclic permutations instead of all permutations, due to the assumption that learning is symmetric. In practise, it is not advantageous to compute $\Phi_0$ directly because a learning procedure should be used on more than just one test observation for numerical efficiency (see Section~\ref{Practice}); however, it is very convenient to consider $\Phi_0$ for ease of presentation.

% In case we want to compare more than two methods $M^{(1)}, \dotsc, M^{(k)}$, we define 
% $$
% \Phi_{(M^{(i)})}(z_1, \dotsc, z_{g+1}) \defequal \sum_{i\ne j}(\Phi_{M^{(i)}, M^{(j)}}(z_1, \dotsc, z_{g + 1}))^2
% $$
% as the sum of squares of the $\Phi$ defined above for the pairwise comparisons. $\Phi$ is thus (a variant of) an unstudentized sum-of-squares test statistic of a one-way ANOVA designed to test the null that all methods have equal performance. If $k = 2$, a (hypothetical) exact test using $\Phi_{(M^{(i)})}$ is equivalent to a two-sided test using $\Phi_{M^{(1)}, M^{(2)}}$.

% For ease of notation, we will simply write $\Phi$ for both the pairwise and the group case.
% \begin{remark}
% Without much complication, one could consider the kernel
% $$
% \dots
% $$ 
% of degree $m + 2$ in order to treat ranking problems. However, we have not done this in order to keep the presentation simple.
% \end{remark}
%Conveniently, $\oParam$ is a regular parameter (as defined in \citet{Hoeffding1948}) by definition.

\begin{remark} In case one is interested in estimating $e_M$ only instead of a difference $\oParam = e_M - e_{M'}$, one can set the second summand of~\eqref{Phiformula} identically to zero. We will not go into the details.
\end{remark}
\subsection{Tests of the true error rate}\label{tte}
In this section, let us recall the test problem of interest. We want to test the null hypothesis $H^0: \E{}{\Phi} = 0$ against the alternative $H^1: \E{}{\Phi}\ne 0$. The former is usually called the unconditional null hypothesis \citep{Braga-Neto:2004}.
\begin{remark}
  There is also a conditional null hypothesis where the classification rule is supposed to be given, for instance learnt on fixed independent data, and the expectation is taken only with respect to the test set. However, the learning data are usually also random and may even be modelled to be from $P$ as well. In this case, the conditional null depends on random data, leading to severe difficulties in the interpretation of type one error. For this reason, in this paper we will only consider the unconditional error rate. However, setting $g = 0$ and plugging in a ready-made classification rule for $\Phi$, regardless of the data it was learnt on, leads to a sort of conditional null hypothesis. In this case, the true error becomes a random variable of the learning data, and the latter \emph{must not} be from the sample $(z_1, \dotsc, z_n)$. We will not go into details of conditional testing or of the case $g = 0$.
\end{remark}

The form of $H^ 0$ suggests a $t$-test. However, the number of independent realizations of $\Phi({\bf z}_1, \dotsc, {\bf z}_{g + 1})$ is only $\floor{n/(g + 1)}$, since it is to be computed with respect to $P^{\otimes (g+1)}$. Therefore, a correct $t$-test would be severely underpowered, and cross-validation procedures are usually preferred.
\subsection{Cross-validation}\label{CV}
Let us now show how cross-validation procedures fit into the framework described above. In a cross-validation procedure, dependent realizations of $\Phi({\bf z}_1, \dotsc, {\bf z}_{g + 1})$ are considered. More precisely, for every ordered subset $T = (i_1, \dotsc, i_g; i_{g+1})$ of $\{1, \dotsc, n\}$,
\begin{equation}\label{uincompl}
\Uincompl(T) \defequal \Phi({\bf z}_{i_1}, \dotsc, {\bf z}_{i_g}; {\bf z}_{i_{g+1}})
\end{equation}
is an estimator of $\oParam$, where we visually separate learning and test sets again. We view $\Uincompl(T)$ as an estimator of the difference of error rates of classifiers learnt on samples of size $g$ instead of $n$, in contrast to differing usage in the literature. Thus, $\Uincompl(T)$ is unbiased. Of course, the word ``ordered subset'' refers to the order $1<\dots< g+1$; it is not imposed that $i_1<\dots < i_{g+1}$. Similarly, if $S =\{i_1, \dotsc, i_{g+1}\}$ is an \emph{unordered} subset of size $g+1$ of $\{1, \dotsc, n\}$, the value $\Phi_0({\bf z}_{i_1}, \dotsc, {\bf z}_{i_{g+1}})$, using the symmetric $\Phi_0$ instead of $\Phi$, does not depend on the order of $S$. Therefore, we can unambiguously extend definition~\eqref{uincompl} to a function $\Uincompl$ also on the collection of unordered subsets $S$ by setting $\Uincompl(S)\defequal\Phi_o(S)$. This is an unbiased estimator of $\oParam$. Also, let $\mathscr{T}$ be a collection of ordered subsets $T$ as above. Then, let
\begin{equation}\label{uincompl2}
\Uincompl(\mathscr{T}) \defequal \frac{1}{\abs{\mathscr{T}}} \sum_{T\in \mathscr{T}} \Uincompl(T)
\end{equation}
be the average of all values of $\Uincompl(T)$ over $\mathscr{T}$, and similarly $\Uincompl(\mathscr{S})$ for a collection $\mathscr{S}$ of unordered subsets $S$ as above the average of all values $\Uincompl(S)$ involving the symmetric function $\Phi_0$. Equation \eqref{uincompl2} may involve each learning set multiple times because each observation of a test sample can then contribute a summand to \eqref{uincompl2}. In other contexts the mean error rate over the entire test sample is counted as only one occurrence of the learning set.

For any such collections $\mathscr{T}$ or $\mathscr{S}$, the estimators $\Uincompl(T)$ and $\Uincompl(S)$ are unbiased for $\oParam$. As soon as $\mathscr{T}$ contains together with an ordered subset $T = (i_1, \dotsc, i_g; i_{g+1})$ all its cyclic permutations $(i_2, \dotsc, i_{g+1}; i_1), (i_3, \dotsc, i_{g+1}, i_1; i_2)$ and so on, we have $\Uincompl(\mathscr{T}) = \Uincompl(\mathscr{S})$ where the collection $\mathscr{S}$ is obtained from the collection $\mathscr{T}$ by forgetting the order (and the multiple entries with the same order coming from the cyclic permutation).

Now, the ordinary $K$-fold cross-validation can be incorporated in this framework as follows. Suppose that $K$ is such that $K(n-g) = n$, possibly after disregarding a few observations in order to assure divisibility of $n$ by $n-g$. Therefore, $g \ge n/2$. The extreme cases are $g = n/2$ for $K=2$ and $g = n-1$ for $K = n$. Let $\mathscr{T}_{\text{\emph{CV}}}$ be a collection of ordered subsets of the form
\begin{equation}\label{cvT}
T = \bigl(1, \dotsc, {k(n-g)}, {(k+1)(n-g) + 1}, \dotsc, n; t\bigl)
\end{equation}
where $k = 0, \dotsc, K -1$ enumerates the learning blocks, the notation is to be read in such a way that if $k=0$ the first entry is ${n - g + 1}$ and if $k = K-1$ the last one is $n$, and $t\in \{k(n-g) + 1, \dotsc,  (k+1)(n-g)\}$ enumerates all test observations distinct from the learning block. Thus, $T$ consists of one or two learning strides whose indices are contiguous and whose sizes add up to $g$, together with a single test observation index distinct from any learning observation index. Then $\Uincompl(\mathscr{T}_{\text{\emph{CV}}})$ recovers the ordinary cross-validation estimator of $\oParam$. In practise, one may also compute $\Uincompl(\mathscr{T}_{\text{\emph{CV}}})$ from a permutation of the data, but this does not influence the formal description because $P^{\otimes n}$ is permutation-invariant.
\begin{definition}
We will in general refer to estimators of the form $\Uincompl(\mathscr{T})$ or $\Uincompl(\mathscr{S})$ given by~\eqref{uincompl2}, as to \emph{cross-validation-like procedures}, irrespectively of the structure of $\mathscr{T}$ or $\mathscr{S}$.
\end{definition}
It was shown in \citet{Bengio:2003} that there is no unbiased estimator of the variance $\V{P^{\otimes n}}{\Uincompl(\mathscr{T}_{\text{\emph{CV}}})}$ for any cross-validation procedure $\mathscr{T}_{\text{\emph{CV}}}$, i.e.\ any divisor $K$ of $n$.

It seems plausible from this tedious description of cross-validation that such a particular design $\mathscr{T}_{\text{\emph{CV}}}$ consisting merely of sets of the special form~\eqref{cvT} does not lead to a globally small variance of $\Uincompl(\mathscr{T}_{\text{\emph{CV}}})$ among all possible designs $\mathscr{T}$ with fixed learning set size $g$. This variance is minimal for the cross-validation-like procedure $\mathscr{T}_{\text{\emph{max}}}$ consisting of \emph{all} size $(g+1)$-subsets. We will expose the cases where there is an unbiased variance estimator of it, in contrast to the cross-validation case. Let us call this $\mathscr{T}_{\text{\emph{max}}}$ the maximal design. Another immediate advantage of it over an incomplete one is the fact that the need for a balanced data set, i.e. algorithms whose class labels are equally frequent, and/or for balanced blocks falls away. The only case of $g$ and $K$ such that $\mathscr{T}_{\text{\emph{CV}}} = \mathscr{T}_{\text{\emph{max}}}$ is the leave-one-out case $g = n-1$ respectively $K = n$.

Similarly, one can distinguish those cases of $g$ respectively $K$ such that the associated design $\mathscr{T}$ contains along with an ordered subset $T$ all its cyclic permutations. In such a case, $\Uincompl(\mathscr{T}) = \Uincompl(\mathscr{S})$ for the design $\mathscr{S}$ corresponding to $\mathscr{T}$. Among the cross-validation procedures, only the leave-one-out case $g = n -1$ respectively $K = n$ produces this situation. However, among the cross-validation-like procedures, this can happen for any $g$. For instance, it holds for the maximal design for all $0\le g \le n-1$. This is important to keep in mind for numerical implementation.
\subsection{The full cross-validation-like estimator of $\oParam$ is a $U$-statistic}
In this section, we show that the cross-validation-like procedure with maximal design, where all size-$g$-subsets of the sample are used for learning, is a $U$-statistic and therefore has least variance among all cross-validation-like procedures. It seems that this fact has not yet been described in the literature. Among the immediate consequences of interpreting this procedure as a $U$-statistic will be asymptotic normality, the first case of Theorem~\ref{AA}. The parameter of interest $\Theta = \oParam$ is a regular parameter because of~\eqref{ThetaIsRegPa}; this equation also shows that its degree is at most $g + 1$.
\begin{assumption}
\label{assumptionZ}
The degree of $\oParam$ is exactly $g + 1$.
\end{assumption}
This states that the true error rate cannot be computed from learning samples of smaller size than $g$ for all (reasonable) distributions $P$. While it is not automatic, it seems to be violated only in irrelevant artificial counterexamples, such as for instance one of the form $\Phi(z^1, z^2, z^3) = \Phi'(z^2, z^3)$ where the learning step only makes use of a part of the learning set observations, and in similar cases. So, the assumption is natural.

Let $\mathscr{S}_{\text{\emph{max}}}$ and $\mathscr{T}_{\text{\emph{max}}}$ be the maximal designs of unordered and ordered subsets, respectively, as introduced above. The corresponding error rate estimator is then the $U$-statistic associated to the particular kernel $\Psi = \Phi$ and $\Psi_0 = \Phi_0$, respectively. We define
\begin{equation}\label{ourUForDeltae}
\U \defequal U(\Phi_0) = \Phi_0(\mathscr{S}_{\text{\emph{max}}}) = \Phi(\mathscr{T}_{\text{\emph{max}}})
\end{equation}
as the associated $U$-statistic as in Definition~\ref{U}, i.e.\ the one defined by the symmetric kernel $\Phi_0$. It follows immediately from \citet{Hoeffding:1948} that it has minimal variance among all unbiased estimators of $\oParam$. In particular, it has strictly smaller variance than all cross-validation procedures for $2\le g \le n-2$, and is equal to the cross-validation estimator in the leave-one-out case $g = n-1$. \citet[Section 4.3, Theorems 1 and 4]{Lee:1990} describes quantitatively the variance decrease of $\U$ with respect to $\Uincompl$. These theorems treat the case of a fixed $\mathscr{S}$ and an $\mathscr{S}$ consisting of random subsets, respectively. The statistic $\U$ coincides with what is called complete cross-validation in \citet{Kohavi:1995}, as well as with complete repeated sub-sampling as considered in \cite{Boulesteix:2008}, or leave-$p$-out cross-validation in \cite{Shao:1993} and \cite{Arlot:2010}, where $p = n-g$.

In practise, the definition of $\U$ involves too many summands for computation, but can be easily approximated to arbitrary precision using an $\mathscr{S}$ of random subsets, see Section~\ref{Practice}.
\section{A $U$-statistic estimator of $\V{}{\U}$}\label{VarEstimator}
\subsection{Variances are regular parameters}
The theory of $U$-statistics comes to full power as soon as not only the original regular parameter $\oParam$ is estimated optimally by a $U$-statistic $\U$, but also the variance $\V{}{\U}$ of this $U$-statistic itself is exhibited as another regular parameter, this time depending not only on $\Phi_0$ but also on $n$. Therefore, we are in a position to estimate $\V{}{\U}$ by a $U$-statistic as well.

In the following Proposition, we outline formally that variances and covariances are regular parameters in general, without determining optimally the degree. Thus, the full power of $U$-statistics can be used to estimate them. We then pin down the degree in Proposition~\ref{VarIsU}.
% Both propositions are probably no surprise to experts in $U$-statistics, but we include them for completeness and in order to have a slick formulation at hand.
\begin{proposition}\label{UVar}
Let $f({\bf z}_1, \dotsc, {\bf z}_k)$ be a function of $k$ realizations of independent identically distributed random variables ${\bf \z}_i\sim P$ with existing variance $\V{P^{\otimes k}}{f} < \infty$. Then the variance $\V{P^{\otimes k}}{f}$ is a regular parameter of degree at most $2k$. More generally, the covariance between two such functions $f$ and $g$, as soon as it exists, is a regular parameter of degree at most $2k$. 
\end{proposition}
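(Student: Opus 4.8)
The plan is to write the variance (and more generally the covariance) explicitly as an expectation of a symmetric kernel over $2k$ i.i.d. copies of $P$, which by Definition~\ref{U} and \eqref{rp} is exactly what it means to be a regular parameter of degree at most $2k$. Concretely, I would start from the standard identity for the covariance of two functions of independent samples: if $({\bf z}_1,\dotsc,{\bf z}_k)$ and $({\bf z}_{k+1},\dotsc,{\bf z}_{2k})$ are two independent blocks, each consisting of $k$ i.i.d.\ draws from $P$, then
\[
\Cov{P^{\otimes 2k}}{f,g} = \E{P^{\otimes k}}{f({\bf z}_1,\dotsc,{\bf z}_k)\,g({\bf z}_1,\dotsc,{\bf z}_k)} - \E{P^{\otimes 2k}}{f({\bf z}_1,\dotsc,{\bf z}_k)\,g({\bf z}_{k+1},\dotsc,{\bf z}_{2k})}.
\]
Both terms on the right are integrals of a (not necessarily symmetric) measurable function of $2k$ vector arguments against $dP({\bf z}_1)\dotsm dP({\bf z}_{2k})$ — for the first term the integrand simply ignores the last $k$ arguments — so their difference is of the form \eqref{rp} with $m = 2k$ and some kernel $\Psi$. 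The variance is the special case $f=g$.

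The second step is to pass from this (generally non-symmetric) kernel to a symmetric one, as required by Definition~\ref{U}. This is exactly the symmetrization device recalled in Section~\ref{HoeffdingDef}: given any measurable $\Psi:\R^{r\times 2k}\to\R$ with $\E{P^{\otimes 2k}}{\Psi} = \Cov{}{f,g}$, the symmetrized kernel
\[
\Psi_0({\bf z}_1,\dotsc,{\bf z}_{2k}) \defequal \frac{1}{(2k)!}\sum_{\pi}\Psi({\bf z}_{\pi(1)},\dotsc,{\bf z}_{\pi(2k)}),
\]
where the sum runs over all permutations $\pi$ of $\{1,\dotsc,2k\}$, is symmetric, measurable, and satisfies $\E{P^{\otimes 2k}}{\Psi_0} = \E{P^{\otimes 2k}}{\Psi} = \Cov{}{f,g}$ because $P^{\otimes 2k}$ is permutation-invariant. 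Hence $\Cov{}{f,g}$ is represented in the form \eqref{rp} by the symmetric kernel $\Psi_0$ of $2k$ arguments, which is the definition of a regular parameter of degree at most $2k$. I would also note that integrability of $\Psi_0$ (needed for the expectations to be finite) follows from the assumed finiteness of $\Cov{}{f,g}$ together with $\E{}{f^2},\E{}{g^2}<\infty$, via Cauchy–Schwarz applied to each of the two terms above.

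There is essentially no deep obstacle here; the statement is soft and the only things to be careful about are bookkeeping. The mildly delicate points are: (i) making sure the first expectation $\E{P^{\otimes k}}{fg}$ is genuinely written as an integral over $2k$ (not $k$) copies — this is just padding the integrand with dummy variables, but it must be said explicitly so that the ``degree at most $2k$'' claim is literally of the form \eqref{rp}; (ii) checking measurability of the products and of the symmetrization, which is immediate from measurability of $f$ and $g$ and the fact that finite sums and coordinate permutations preserve measurability; and (iii) the integrability/existence argument via Cauchy–Schwarz, which is the one place an actual (trivial) inequality is invoked. I would close by remarking that this argument says nothing about whether $2k$ is the true degree — that sharper question, in the case relevant to us ($f = g = \Phi_0$, $k = n$), is precisely what Proposition~\ref{VarIsU} addresses — so here it suffices to exhibit one symmetric representing kernel of $2k$ arguments and stop.
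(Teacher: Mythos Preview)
Your proposal is correct and follows essentially the same approach as the paper: exhibit the variance (covariance) as a single integral over $2k$ i.i.d.\ copies of $P$, then symmetrize. The only cosmetic difference is the choice of integrand --- the paper uses the squared-difference kernel $\tfrac{1}{2}\bigl(f({\bf z}_1,\dotsc,{\bf z}_k) - f({\bf z}_{k+1},\dotsc,{\bf z}_{2k})\bigr)^2$ (and its bilinear analogue for the covariance), which avoids your padding-with-dummy-variables step and makes nonnegativity and integrability immediate, but is otherwise the same device and likewise still requires symmetrization.
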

\begin{proof}
Both $\V{}{f}$ and $\Cov{}{f, g}$ are, by definition, polynomials of integrals with respect to $P$. In order to show that they are regular parameters, we have to rewrite each one as a single integral instead. This is accomplished by
\begin{equation}\label{cc}
\begin{split}
\V{P^{\otimes k}}{f} &= \E{}{f^2} - \E{}{f}^2\\
&= \idotsint \frac{1}{2}\bigl(f({\bf z}_1, \dotsc, {\bf z}_k) - f({\bf z}_{k + 1}, \dotsc, {\bf z}_{2 k})\bigr)^2 dP({\bf z}_1) \dotsm dP({\bf z}_{2k})
% \\
% \Cov{P^{\otimes k}}{f, g} &= \E{}{fg} - \E{}{f}\E{}{g}\\
% &= \idotsint \bigl(f({\bf z}_1, \dotsc, {\bf z}_k)g({\bf z}_1, \dotsc, {\bf z}_k) - f({\bf z}_1, \dotsc, {\bf z}_k)g({\bf z}_{k+1}, \dotsc, {\bf z}_{2k})\bigr)\\
% & dP({\bf z}_1) \dotsm dP({\bf z}_{2k}).
\end{split} 
\end{equation}
and an almost analogous formula for the covariance $\Cov{P^{\otimes k}}{f, g}$.
\end{proof}
The integrand is not unique. It was chosen in such a way to resemble the symmetric kernel $(z_1 - z_2)^2/2$ of the variance of $P$ itself, i.e.\ the case $r = 1, f(z_1) = z_1$. Furthermore, the degree of $\V{}{f}$, i.e.\ the minimal length of an integrand that accomplishes this, can be much smaller than $2k$ and depends on $f$. Also, the integrand of~\eqref{cc} is not symmetric in general and remains to be symmetrized.

% As a consequence of Proposition~\ref{UVar}, the full power of $U$-statistics can be used for estimating variances and covariances. We will use it in particular for the case where $f$ and $g$ are $U$-statistics themselves.
Let us now investigate the case where $f$ is a $U$-statistic associated to a symmetric kernel $\Phi_0$. Caution has to be taken because the regular parameter now depends on $n$, in sharp contrast to the $U$-statistic $\U$ itself. For the case $f = \U$, we have $k = n$, so our knowledge attained so far on the degree of the kernel of $\V{}{\U}$ is that it is at most $2n$. However, it is possible to obtain better insight into the degree of the variance. It will turn out that the variance is a linear combination of regular parameters, each of whose degrees do \emph{not} depend on $n$, only the coefficients of the linear combination depend on $n$. This is the content of the following proposition, which presents in short form results of \citet[Section 5]{Hoeffding:1948} as well as immediate consequences.

In the following, we will consider a general underlying $U$-statistic $U$ which estimates an unknown parameter $\Theta$, and develop the theory of its variance as it is needed for its estimation. From Section~\ref{UForVar} on, we will pay particular attention to the case where $U$ is associated to the kernel $\Phi_0$ defined by~\eqref{PhiformulaSymmetrized}, thus $\Theta = \oParam$ and $U = \widehat{\oParam}$.
\begin{proposition}\label{VarIsU}
Let $U$ be the $U$-statistic associated to a bounded symmetric kernel $\Phi_0$ of degree $m$ and to a total sample size $n$. Denote $\Theta =\E{}{\Phi_0}$. Then the variance of $U$ is a regular parameter of degree at most $2m$. Furthermore, it splits as a sum
\begin{equation}\label{lc}
\V{}{U} = \sum_{c = 1}^m \alpha_c\kappa_c - (1 - \alpha_0)\Theta^2,
\end{equation}
where $\alpha_c$ is the mass function at $c$ of the hyper-geometric distribution $\mathcal{H}(n, m, m)$, and all $\kappa_c$ are regular parameters satisfying
\begin{equation}\label{KappaIsRegPa}
\kappa_c =\idotsint \Phi_0({\bf z}_1, \dotsc, {\bf z}_m)\Phi_0({\bf z}_{m-c+1},\dotsc, {\bf z}_{2m-c}) dP({\bf z}_1)\dotsm dP({\bf z}_{2m-c}).
\end{equation}
Thus, $\kappa_c$ is a regular parameter of degree at most $2m-c$.
Furthermore, since
\begin{equation}\label{ThetaSqIsRegPa}
\Theta^2 = \idotsint \Phi_0({\bf z}_1, \dotsc, {\bf z}_m)\Phi_0({\bf z}_{m+1},\dotsc, {\bf z}_{2m}) dP({\bf z}_1)\dotsm dP({\bf z}_{2m}),
\end{equation}
$\Theta^ 2$ is a regular parameter of degree at most $2m$.
\end{proposition}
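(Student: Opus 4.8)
The plan is to carry out Hoeffding's classical variance computation \citep[Section~5]{Hoeffding:1948} and then read the degree bounds off the resulting formula. First I would expand the variance as a double sum over the maximal design $\mathscr{S}$,
\[
\V{P^{\otimes n}}{U} = \binom{n}{m}^{-2}\sum_{S \in \mathscr{S}}\sum_{S' \in \mathscr{S}}\Cov{}{\Phi_0(S),\, \Phi_0(S')},
\]
and observe that, since the ${\bf \z}_i$ are i.i.d., $\Cov{}{\Phi_0(S), \Phi_0(S')}$ depends on $(S,S')$ only through the overlap size $c = \abs{S\cap S'}$; call this common value $\zeta_c$, which equals the covariance of two copies of $\Phi_0$ sharing exactly $c$ of their $m$ arguments. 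Boundedness of $\Phi_0$ makes all these covariances, and the quantities $\kappa_c$, finite. When $c = 0$ the sets are disjoint, $\Phi_0(S)$ and $\Phi_0(S')$ are independent, and $\zeta_0 = 0$.

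Next I would count, for each $c$, the ordered pairs $(S,S')$ with $\abs{S\cap S'} = c$: there are $\binom{n}{m}$ choices for $S$ and then $\binom{m}{c}\binom{n-m}{m-c}$ choices for $S'$ (choose $c$ indices inside $S$ and $m-c$ outside). Substituting and cancelling one factor $\binom{n}{m}$ yields
\[
\V{}{U} = \sum_{c=0}^m \frac{\binom{m}{c}\binom{n-m}{m-c}}{\binom{n}{m}}\,\zeta_c = \sum_{c=1}^m \alpha_c\,\zeta_c ,
\]
where $\alpha_c = \binom{m}{c}\binom{n-m}{m-c}\big/\binom{n}{m}$ is exactly the point mass at $c$ of $\mathcal{H}(n,m,m)$ and the $c=0$ term drops by the previous step. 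I would then write $\zeta_c = \kappa_c - \Theta^2$, which is immediate on expanding the covariance and using that each factor has expectation $\Theta$; the expectation of the product is precisely $\kappa_c$ as in~\eqref{KappaIsRegPa}, up to a harmless relabelling of which $c$ arguments are shared. Collecting the $\Theta^2$ terms via $\sum_{c=0}^m \alpha_c = 1$, hence $\sum_{c=1}^m \alpha_c = 1 - \alpha_0$, gives exactly~\eqref{lc}. Formula~\eqref{ThetaSqIsRegPa} is the special case of the same expansion with zero overlap, or simply Fubini, and displays $\Theta^2$ as an integral of a function of $2m$ arguments.

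It then remains to read off the degrees. The integrand in~\eqref{KappaIsRegPa} depends on the $2m-c$ vectors ${\bf z}_1,\dotsc,{\bf z}_{2m-c}$; symmetrizing it as in the proof of Proposition~\ref{UVar}, which does not alter the number of arguments, exhibits $\kappa_c$ as a regular parameter of degree at most $2m-c$, and likewise $\Theta^2$ as one of degree at most $2m$ from~\eqref{ThetaSqIsRegPa}. Since a finite linear combination of regular parameters is again a regular parameter — pad the shorter integrands with dummy arguments that integrate out trivially, add the integrands, and symmetrize — and since in~\eqref{lc} the $\kappa_c$ carry degree at most $2m-1$ while the $\Theta^2$ term carries degree at most $2m$, the variance $\V{}{U}$ is a regular parameter of degree at most $2m$. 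I do not anticipate a genuine obstacle: the argument is Hoeffding's identity, and the only points needing care are the bookkeeping that identifies the normalised pair count with the hypergeometric mass function, and the remark that forming linear combinations, with the attendant padding and symmetrization, keeps the degree bounded by the largest degree occurring — it is this last point that yields the independence of $n$ of the degrees of $\kappa_c$ and $\Theta^2$, which is the actual improvement over Proposition~\ref{UVar}.
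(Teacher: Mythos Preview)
Your proposal is correct and follows essentially the same route as the paper: both obtain Hoeffding's variance decomposition $\V{}{U}=\sum_{c=1}^m\alpha_c\zeta_c$ with hypergeometric weights, rewrite $\zeta_c=\kappa_c-\Theta^2$, collect the $\Theta^2$ terms via $\sum_{c=1}^m\alpha_c=1-\alpha_0$, and finish by observing that linear combinations of regular parameters are regular. The only difference is presentational: the paper cites \citet[Section~5, in particular formulas 5.10 and 5.13]{Hoeffding:1948} for the variance formula and the relation $\zeta_c=\kappa_c-\Theta^2$, whereas you re-derive these from the double-sum expansion and the overlap-counting argument directly.
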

\begin{proof}
Direct computation shows that the right hand side of~\eqref{KappaIsRegPa} coincides with what is called $\E{}{\Phi_c^2(X_1, \dotsc, X_c)}$ in \citet[Section 5]{Hoeffding:1948} for all $1\le c \le m$. This step involves the symmetry of the kernel $\Phi_0$ and careful renaming of the variables. Hoeffding already supposes a symmetric kernel which he calls $\Phi$. The quantities $\zeta_c$ of~\citet{Hoeffding:1948} -- which are called $\sigma_c$ in \citet{Lee:1990} -- are thus related to our $\kappa_c$ by means of the equation $\zeta_c = \kappa_c - \Theta^2$, as follows from \citet[formula 5.10]{Hoeffding:1948}. From $\V{}{U} = \sum_{c=1}^m\alpha_c\zeta_c$ \citep[5.13]{Hoeffding:1948} we thus deduce $\V{}{U} = \sum_{c = 1}^m\alpha_c(\kappa_c - \Theta^2) = \sum_{c=1}^m\alpha_c\kappa_c - (1-\alpha_0)\Theta^2$ because $\sum_{c=1}^m\alpha_c
%= (\sum_{c=1}^m\alpha_c)
= (1- \alpha_0)$.

The fact that linear combinations of regular parameters are regular parameters \citep[Page 295]{Hoeffding:1948} completes the proof.
\end{proof}
Proposition~\ref{VarIsU} achieves the desired simplification: The degree of $\V{}{U}$ for a $U$-statistic $U$ of degree $m$ is shown to be at most $2m$ instead of $2n$, and the dependence of $\V{}{U}$ on $n$ is now expressed solely by means of the hyper-geometric mass function, whereas $\kappa_c$ and $\E{}{U}^2$ do not depend on $n$.
\begin{remark}\label{kioz}
Direct computation yields $\E{}{U^2} = \sum_{c=1}^m\alpha_c\kappa_c + \alpha_0\Theta^2$, making use of the fact that the kernel is symmetrized. This together with the usual decomposition $\V{}{U} = \E{}{U^2} - \E{}{U}^2 = \E{}{U^2} - \Theta^2$ also proves~\eqref{lc} and shows that the degree of $\E{}{U^2}$ is at most $2m$. It is natural to assume that its degree is exactly $2m$, in analogy to assumptions~\ref{assumptionZ} above and~\ref{assumptionA} below. In contrast, the advantage of decomposition~\eqref{lc} is that the first summand only involves the $\kappa_c$ which all have smaller degree than $2m$, namely $2m-c$. Therefore, we prefer decomposition~\eqref{lc} over the usual decomposition and work with and estimate the quantities $\kappa_c$ rather than Hoeffding's $\zeta_c$ which all have degree $2m$ (see also Remark~\ref{whyKappa} below).
\end{remark}
\subsection{Definition of the $U$-statistic for the variance}\label{UForVar}
In order to estimate the variance of the $U$-statistic $U$ by another $U$-statistic, we need the following.
\begin{assumption}
\label{assumptionA}
In the general situation of Proposition~\ref{VarIsU}, the statistic $U$ is non-degenerate. In the particular case $U =\U$ where $\Theta = \oParam$, this states that $\kappa_c\ne \oParam^2$ for $1\le c\le g + 1$.

Furthermore, we assume in the situation of Proposition~\ref{VarIsU} that all upper bounds for the degrees thus obtained are optimal. In the particular case $U =\U$, this means that the degree of $\kappa_c$ is $2m-c = 2g + 2 - c$ and that of $\Theta^2 = \oParam^2$ is $2m = 2g + 2$.
\end{assumption}
The non-degeneracy can be numerically checked for plausability, unlike Assumption~\ref{assumptionZ} and the degree optimality which both state that the regular parameters cannot be written by a smaller number of integrals. There seems to be no reason why a kernel of the form~\eqref{PhiformulaSymmetrized} with a non-trivial classifier should not satisfy them. The first part of Assumption~\ref{assumptionA} is needed for the central limit theorem~\ref{AA}, the second one for Theorem~\ref{MainTheorem}.

Proposition~\ref{VarIsU} motivates the following definition.
\begin{definition}\label{KappaDefi}
In the general situation of Proposition~\ref{VarIsU} and under Assumption~\ref{assumptionA}, the statistics $\widehat{\kappa_c}$ for $1\le c\le m$ of degree $2m - c$ and the statistic $\widehat{\Theta^2}$ of degree $2m$ are defined as the $U$-statistics associated to the symmetrized versions of the kernels which are the integrands in~\eqref{KappaIsRegPa} and~\eqref{ThetaSqIsRegPa}, respectively.
\end{definition}
Estimating $\Theta^2$ by $U^2$ instead would be biased and thus would not fit in our framework.
\begin{remark}\label{EstimateThetaSquaredAsWell}
 It would not be suitable to simply estimate $\Theta^2$ by zero in view of $H^0:\Theta = 0$. The first reason is that failure to subtract $(1 - \alpha_0)\cdot\widehat{\Theta^2}$ from the variance estimator~\eqref{VarEstForAnyU} below would overestimate the variance $\V{}{U}$ under $H^1$, leading to a severe loss of power. The second is that it would conflict with Hoeffding's setup. In fact, under the null hypothesis $\Theta = 0$, the degree of $\Theta$ and that of $\Theta^2$ would be trivially zero, if we were willing to restrict Hoeffding's class $\mathcal{P}$ of distributions to only ones obeying the null; however, the least-variance optimality property of a $U$-statistic relies on $\mathcal{P}$ encompassing \emph{all} properly discontinuous distribution functions, not only null ones. Likewise, the degree has to be defined for a global class of null and alternative together. This is akin of a classical one-way analysis of variance statistic where estimating variance within and between groups separately greatly increases the power.
\end{remark}
%Under Assumption~\ref{assumptionA}, Proposition~\ref{VarIsU} shows that the variance of a $U$-statistic can be estimated by the following, which is the most important definition of this paper.
We can now define the variance estimator of a $U$-statistic as a $U$-statistic itself.
\begin{definition}\label{MainDefi}
In the general situation and notation of Proposition~\ref{VarIsU} and under Assumption~\ref{assumptionA}, we define an estimator, abbreviated $\wVU$, for the variance of the $U$-statistic $U$ as the $U$-statistic associated to the linear combination as in~\eqref{lc} of the kernels of $\kappa_c$ and of $\Theta^2$ given by~\eqref{KappaIsRegPa} and~\eqref{ThetaSqIsRegPa}.

After a short and straightforward computation, the definition can be re-stated alternatively in more explicit form: The single $U$-statistic $\wVU$ splits as a sum 
\begin{equation}\label{VarEstForAnyU}
\wVU \defequal \sum_{c = 1}^m \alpha_c\widehat{\kappa_c} - (1 - \alpha_0)\widehat{\Theta^2}
\end{equation}
of $U$-statistics of varying degrees. In the particular case $U =\U$ where $\Theta = \oParam$, this defines an estimator for $\V{}{\U}$, which will be abbreviated by $\VarEst$.
\end{definition}
The estimator $\wVU$ enjoys the unbiasedness and optimality properties analogous to $\U$. In particular, this applies to $\VarEst$.
\begin{remark}\label{whyKappa}
In the latter case $U = \U$, we have $m = g + 1$ so the degree of $\VarEst$ is $2g + 2$, and that of $\widehat{\kappa_c}$ was given in the degree optimality statement of Assumption~\ref{assumptionA}. The reason for splitting $\wVU$ into several $U$-statistics of varying degree is numerical efficiency: \citet{Hoeffding:1948} suggests to estimate $\zeta_c = \kappa_c - \Theta^2$. However, all of these parameters have degree $2m$. Instead, it is of course advisable to estimate the $\kappa_c$ which have smaller degee $2m - c$. Then, $\Theta^2$ needs to be estimated only once.
% It is more convenient to compute $U$-statistics of small degrees, and to compute $\widehat{\Theta^2}$, which is the $U$-statistic of maximal degree, only once instead of computing estimators of each $\zeta_c$ which all have the same maximal degree $2g + 2$ because they all involve, and are in fact defined by, subtracting the same $\widehat{\Theta^2}$ from $\widehat{\kappa_c}$ for each $c$. Instead, each $\kappa_c$ has smaller degree than $2g + 2$.
This remark is the empirical analogue to Remark~\ref{kioz}.

\end{remark}
\subsection{Existence criterion and order of consistency}
We are now in a position to investigate the estimator of $\V{}{\U}$. In principle, this section applies to the general situation of Proposition~\ref{UForVar}, but in order to keep the presentation clear we will focus on the interesting case $\Theta = \oParam, U = \U$ for the rest of the paper. Therefore, we will write $\widehat{\oParam^2}$ for the statistic $\widehat{\Theta^2}$ whereas we will not introduce a special notation for the statistics $\kappa_c$ for that case.

In the consistency statement of Theorem~\ref{MainTheorem} below, the true parameter $\V{}{\U}$ depends on $n$, unlike in a typical consistency statement. In principle, the sample size used for this estimation does not need to be the same $n$ again, but can in fact be any number $n' \ge 2g + 2$. However, in practise the same sample is used to estimate $\oParam$ as well as $\V{}{\U}$, so we restrict our attention to the diagonal case $n = n'$ for simplicity. This is analogous to the ordinary one-sample $t$-test statistic, where both the numerator, the sample mean, and its standard deviation, the denominator, are simultaneously estimated on the same sample, so with the same $n$. However, in our case, no factor $n^{-1/2}$ cancels out between both.
\begin{theorem}
\label{MainTheorem}
If $n\ge 2g + 2$, the estimator $\VarEst$ of $\V{}{\U}$ has least variance among all unbiased estimators of $\V{}{\U}$ over any family of distributions $\mathcal{P}$ containing all purely discontinuous distribution functions. Furthermore, $\VarEst$ is strongly consistent in the sense that $n^{d/2}(\VarEst - \V{}{\U}) \to 0$ almost surely for any $0 \le d \le 2$.
\end{theorem}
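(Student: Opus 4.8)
The plan is to prove the two assertions in turn. The first one — unbiasedness together with the minimal-variance property — is essentially bookkeeping on top of the $U$-statistics machinery already in place. Under the hypothesis $n \ge 2g+2 = 2m$, every kernel entering Definition~\ref{MainDefi} has at most $2m \le n$ arguments, so under Assumption~\ref{assumptionA} the $U$-statistics $\widehat{\kappa_c}$ ($1\le c\le m$, of degree $2m-c$) and $\widehat{\oParam^2}$ (of degree $2m$) of Definition~\ref{KappaDefi} are all well defined on $({\bf z}_1,\dots,{\bf z}_n)$, hence so is $\VarEst$, which by its explicit form~\eqref{VarEstForAnyU} equals $\sum_{c=1}^m\alpha_c\widehat{\kappa_c} - (1-\alpha_0)\widehat{\oParam^2}$. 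Each summand is an unbiased $U$-statistic for the regular parameter it estimates, so by linearity and the decomposition~\eqref{lc} of Proposition~\ref{VarIsU} one gets $\E{}{\VarEst} = \sum_{c=1}^m\alpha_c\kappa_c - (1-\alpha_0)\oParam^2 = \V{}{\U}$, which is unbiasedness. Since $\VarEst$ is itself a single $U$-statistic of degree $\le 2m\le n$, it is a symmetric, hence order-statistic-measurable, function of the sample; the order statistic being complete and sufficient for any family $\mathcal{P}$ of distributions containing all purely discontinuous ones, the Lehmann--Scheff\'e argument — which is exactly the minimal-variance property of $U$-statistics recalled after Definition~\ref{U}, see also \citet[Section 4.3]{Lee:1990} — then yields that $\VarEst$ has least variance among all unbiased estimators of $\V{}{\U}$ over such $\mathcal{P}$.

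For the consistency I would use~\eqref{VarEstForAnyU} and~\eqref{lc} to write $\VarEst - \V{}{\U} = \sum_{c=1}^m\alpha_c(\widehat{\kappa_c}-\kappa_c) - (1-\alpha_0)(\widehat{\oParam^2}-\oParam^2)$ and control each of the finitely many terms by two ingredients. The first is the asymptotics of the hyper-geometric weights: $\alpha_c = \binom{m}{c}\binom{n-m}{m-c}\binom{n}{m}^{-1} = O(n^{-c})$ as $n\to\infty$ with $m$ fixed, and more sharply $n\alpha_1 \to m^2$, $n\alpha_c\to 0$ for $c\ge 2$, whence $n(1-\alpha_0) = n\sum_{c=1}^m\alpha_c \to m^2$. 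The second is the strong law of large numbers for $U$-statistics (see \citet{Lee:1990}): since $\Phi_0$, and therefore all kernels of the $\widehat{\kappa_c}$ and of $\widehat{\oParam^2}$, are almost surely bounded hence integrable, $\widehat{\kappa_c}\to\kappa_c$ and $\widehat{\oParam^2}\to\oParam^2$ almost surely as $n\to\infty$, and these $U$-statistics remain a.s.\ uniformly bounded. Combining, for $0\le d\le 2$ and $1\le c\le m$ the factor $n^{d/2}\alpha_c = O(n^{d/2-c})$ is bounded when $c=1$ and $o(1)$ when $c\ge 2$, so $n^{d/2}\alpha_c(\widehat{\kappa_c}-\kappa_c)\to 0$ a.s.\ in every case — for $c=1$ one invokes $n\alpha_1\to m^2$ together with $\widehat{\kappa_1}-\kappa_1\to 0$ a.s. Likewise $n^{d/2}(1-\alpha_0)(\widehat{\oParam^2}-\oParam^2)\to 0$ a.s., and summing gives $n^{d/2}(\VarEst - \V{}{\U})\to 0$ a.s.

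The main obstacle is the endpoint $d=2$, which is exactly what is needed to studentize in Theorem~\ref{AA}: there $\V{}{\U}$ is itself of order $1/n$ and the factors $n\alpha_1$ and $n(1-\alpha_0)$ tend to the nonzero constant $m^2$, so a crude bound on $\widehat{\kappa_1}-\kappa_1$ (mere a.s.\ boundedness of the kernel) no longer suffices and one genuinely needs the strong law of large numbers for $U$-statistics to kill those two borderline terms; every term with $c\ge 2$, and the whole range $0\le d<2$, is handled by boundedness alone. The remaining points — well-definedness of $\VarEst$ once $n\ge 2g+2$, the hyper-geometric asymptotics, and the Lehmann--Scheff\'e step — are routine given Propositions~\ref{UVar} and~\ref{VarIsU}, Assumption~\ref{assumptionA}, and the cited $U$-statistics theory.
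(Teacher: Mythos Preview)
Your proposal is correct and follows essentially the same route as the paper: the optimality part is dispatched by the standard $U$-statistic/Lehmann--Scheff\'e argument, and the consistency part rests on the hyper-geometric weight asymptotics $n\alpha_1\to m^2$, $n\alpha_c\to 0$ for $c\ge 2$, $n(1-\alpha_0)\to m^2$ combined with the strong law of large numbers for $U$-statistics applied to the bounded kernels of $\widehat{\kappa_c}$ and $\widehat{\oParam^2}$. The only cosmetic difference is that the paper first shows $n\VarEst$ and $n\V{}{\U}$ converge almost surely to the common limit $(g+1)^2(\kappa_1-\oParam^2)$ and then subtracts, whereas you decompose $\VarEst-\V{}{\U}$ term by term from the outset; the two presentations are equivalent.
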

%It is important to note that the consistency statement differs from the usual formulation in the sense that \emph{both} $\VarEst$ and $\V{}{\U}$ depend on $n$.
We do not claim to have exhibited the optimal order of consistency.
\begin{proof}[(of Theorem~\ref{MainTheorem})]
The unbiasedness of $\VarEst$ as well as its least-variance optimality follow from the general properties of $U$-statistics. Only the consistency statement remains to be shown. For $0 \le d < 1$, \citet[Equation 5.7]{Hoeffding:1963} applied to the $U$-statistic $\VarEst$ whose kernel is bounded between $0$ and $1$, yields the quantitative version
\begin{equation}\label{Bernstein1}
P\left(\abs{\VarEst - \V{}{\U}} \ge \epsilon n^{-d/2}\right) \le 2 \exp \left(-2 \floor{n/(2g+2)} \epsilon^2  n^{-d}\right)
\end{equation}
for any $\epsilon > 0$ which has to be applied with care because the degree of the $U$-statistic varies with $n$. This only applies to $0\le d \le 1$ and only shows weak consistency. In the following, we make use of the fact that $U$-statistics are strongly consistent meaning that they satisfy the strong law of large numbers if the kernel is absolutely integrable, for instance bounded. This was first shown in an unpublished paper of 1961 by Hoeffding, a complete proof is given in~\citet[Section 3.4.2, Theorem 3]{Lee:1990}. For all cases  $0 \le d \le 2$, let us first show that $n\VarEst$ almost surely tends to $(g + 1)^2(\kappa_1 - \oParam^2)$. For $c\ge 2$, the summands $n\alpha_c\widehat{\kappa}_c$ of $n\VarEst$ almost surely tend to zero, because $n\alpha_c$ does, and $\widehat{\kappa_c}$ is strongly consistent, so the sequence $\widehat{\kappa_c}$ for $n\to \infty$ is almost surely bounded, for every $c$.  For $c = 1$, the summand $n\alpha_1\widehat{\kappa_1}$ almost surely tends to $(g+1)^2\kappa_1$, because $n\alpha_1 \to (g + 1)^2$, and $\widehat{\kappa_1}$ is strongly consistent. Similarly, the summand $n(1 - \alpha_0)\widehat{\oParam^2}$ almost surely tends to $(g+1)^2\oParam^2$, because $n(1 - \alpha_0) \to (g + 1)^2$, and $\widehat{\oParam^ 2}$ is strongly consistent.

The statement now follows from the fact that $\lim_{n\to\infty} n\V{}{\U} = (g+1)^2(\kappa_1 - \oParam^2)$ \citep[5.23]{Hoeffding:1948}.
\end{proof}
Under $H^0$, there are unbiased estimators already for smaller $m$ since then $\oParam^2$ does not need to be estimated. However, as noted in Remark~\ref{EstimateThetaSquaredAsWell}, the optimality property cannot be shown in this case.
\section{Testing}\label{Testing}
\subsection{Central limit theorem}
The convergence of $\U$ towards $\oParam$ as $n\to \infty$ is described by the Strong Law of Large Number, the Law of the Iterated Logarithm and the Berry-Esseen theorem \citep[Section 3.4.2, Theorem 3, Section 3.5, Theorem 1, Section 3.3.2, Theorem 1, respectively]{Lee:1990}. In order to show the existence of an asymptotically exact test, we need the following theorem as it subsumes the unstudentized and the studentized case. It is reminiscent of and contains as special case the statement that the $t$-distributions tend to $\mathcal{N}(0, 1)$ as the degrees of freedom tend to infinity.
\begin{theorem}\label{AA}
Let $u(n)$ be one of the following expressions: the asymptotic variance $u(n) \defequal (g+1)^2\bigl(\kappa_1 - \oParam^2\bigr)/n$, the expression $u(n) \defequal (g+1)^2\bigl(\widehat{\kappa_1} - \widehat{\oParam^2}\bigr) / n$, where $\widehat{\kappa_1}$ and $\widehat{\oParam^2}$ are defined by the case $\Theta = \oParam, U = \U$ in Definition~\ref{KappaDefi}, or $u(n) \defequal \VarEst$ as of Definition~\ref{MainDefi}.

Then $(\widehat{\oParam} - \oParam)u(n)^{-1/2}$ converges in distribution to $\mathcal{N}(0, 1)$ as $g$ remains fixed, $n\to\infty$.
\end{theorem}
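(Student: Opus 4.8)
The plan is to reduce all three cases to the classical central limit theorem for non-degenerate $U$-statistics (Hoeffding's theorem, as in \citet[Section 3.3.2]{Lee:1990}) applied to $\widehat{\oParam}$, together with Slutsky's theorem. First I would record the basic CLT: under Assumption~\ref{assumptionA} the statistic $\widehat{\oParam}$ is non-degenerate, i.e.\ $\zeta_1 = \kappa_1 - \oParam^2 \ne 0$, and therefore $\sqrt{n}(\widehat{\oParam} - \oParam)$ converges in distribution to $\mathcal{N}\bigl(0, (g+1)^2(\kappa_1 - \oParam^2)\bigr)$ as $n\to\infty$ with $g$ fixed; this is immediate from \citet{Hoeffding:1948} since $\lim_{n\to\infty} n\V{}{\U} = (g+1)^2(\kappa_1 - \oParam^2)$ and the kernel $\Phi_0$ is bounded. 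Equivalently, with $u(n) \defequal (g+1)^2(\kappa_1 - \oParam^2)/n$ the deterministic asymptotic variance, $(\widehat{\oParam} - \oParam) u(n)^{-1/2} \to \mathcal{N}(0,1)$. This settles the first of the three cases.

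For the two studentized cases, the strategy is to write
\[
(\widehat{\oParam} - \oParam) u(n)^{-1/2} = (\widehat{\oParam} - \oParam)\bigl((g+1)^2(\kappa_1 - \oParam^2)/n\bigr)^{-1/2} \cdot \sqrt{\frac{(g+1)^2(\kappa_1 - \oParam^2)/n}{u(n)}}
\]
and to show that the second factor converges in probability to $1$; then Slutsky's theorem finishes the argument. For $u(n) = (g+1)^2(\widehat{\kappa_1} - \widehat{\oParam^2})/n$ the ratio under the square root is $(\kappa_1 - \oParam^2)/(\widehat{\kappa_1} - \widehat{\oParam^2})$, and since $\widehat{\kappa_1}$ and $\widehat{\oParam^2}$ are $U$-statistics with bounded kernels they are (strongly, hence weakly) consistent for $\kappa_1$ and $\oParam^2$ respectively by the strong law of large numbers for $U$-statistics \citep[Section 3.4.2, Theorem 3]{Lee:1990}; the non-degeneracy part of Assumption~\ref{assumptionA} guarantees $\kappa_1 - \oParam^2 \ne 0$, so the continuous mapping theorem gives $\widehat{\kappa_1} - \widehat{\oParam^2} \to \kappa_1 - \oParam^2 \ne 0$ and the ratio tends to $1$ in probability. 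For $u(n) = \VarEst$ the relevant ratio is $\bigl((g+1)^2(\kappa_1 - \oParam^2)/n\bigr)/\VarEst = \bigl((g+1)^2(\kappa_1 - \oParam^2)\bigr)/(n\VarEst)$, and Theorem~\ref{MainTheorem} (more precisely the intermediate computation in its proof) already established that $n\VarEst \to (g+1)^2(\kappa_1 - \oParam^2)$ almost surely; again non-degeneracy makes the limit nonzero, so this ratio also tends to $1$.

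The main obstacle — really the only subtle point — is that the degree $m = g+1$ is fixed while $n\to\infty$, so one must be careful that it is genuinely the \emph{fixed-degree} CLT of \citet{Hoeffding:1948} that applies, and that the normalization $n^{1/2}$ (rather than some $n$-dependent power) is the correct one; this is exactly why the hypothesis "$g$ remains fixed" appears. One should also note that no factor $n^{-1/2}$ cancels between numerator and denominator here, in contrast to the ordinary $t$-statistic: the numerator $\widehat{\oParam} - \oParam$ is of order $n^{-1/2}$ while $u(n)$ is of order $n^{-1}$, so $u(n)^{-1/2}$ is of order $n^{1/2}$ and the product is $O_P(1)$ — this is the "$t$-distribution tends to $\mathcal{N}(0,1)$" analogy mentioned before the statement. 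Everything else is a routine application of Slutsky plus the consistency facts already in hand, so I would keep the write-up short.
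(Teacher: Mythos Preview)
Your proposal is correct and follows essentially the same approach as the paper: Hoeffding's CLT for non-degenerate $U$-statistics gives the first case, and the two studentized cases are reduced to it via Slutsky by showing that the appropriate variance ratio converges to one, using the strong law for $U$-statistics and the computation $n\VarEst\to (g+1)^2(\kappa_1-\oParam^2)$ from the proof of Theorem~\ref{MainTheorem}. The only cosmetic difference is that the paper factors through the exact variance $\V{}{\U}$ (writing $(\widehat{\oParam}-\oParam)\V{}{\U}^{-1/2}\cdot(\V{}{\U}/u(n))^{1/2}$) whereas you factor through the asymptotic variance of the first case; both work, and the paper additionally remarks that non-degeneracy ensures $u(n)>0$ eventually so the reciprocal is well defined.
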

The occurrence of the factor $(g+1)^2$ is explained by the fact that this is the decay rate of the coefficient $\alpha_1$ in the sense that $\lim_{n\to\infty} n\alpha_1 = (g + 1)^2$. This also explains the asymptotic behaviour of the variance.

The first case of Theorem~\ref{AA} shows approximate normality of the unstudentized statistic $\oParam$ itself. It seems that there exists no statement in the literature giving the precise reason why a cross-validation type estimator is asymptotically normally distributed. This case appears in the asymptotic variance statement \citet[5.23]{Hoeffding:1948}. The second case is included for systematic reasons; this expression is the empirical analogue of the first case, but is a biased variance estimator. Finally, the third case includes the unbiased variance estimator elaborated in the present manuscript. The fact that $\V{}{\U} / u(n)$ tends to one, shown in the following proof, is not immediate, due to the diagonality property $n=n'$ mentioned above. Likewise, it is not obvious whether this ratio almost surely tends to one.
\begin{proof}[(of Theorem~\ref{AA})]
In the first case, this is \citet[Theorem 7.1]{Hoeffding:1948} and rests on the validity of the first part of Assumption~\ref{assumptionA}. In the other cases, the proof proceeds simultaneously. First, the proof of Theorem~\ref{MainTheorem} shows that convergence of $n\V{}{\U}$ implies the almost sure convergence not only of $\VarEst$ but in fact of $n u(n)$ for any of the choices of $u(n)$. Thus, $(n u(n))^{-1}$ is almost surely bounded. This statement is licit because the first part of Assumption~\ref{assumptionA} implies $\kappa_1\ne\oParam^2$, so $nu(n) $ converges to a non-zero value and, therefore, there are at most only finitely many $n$ such that $u(n) = 0$ has positive probability. Consequently, we may multiply $n(u(n) - \V{}{\U})$ which converges almost surely to zero, hence also in probability, with $(n u(n))^{-1}$ to show that in each case, the ratio $\V{}{\U} / u(n)$ tends to one in probability by Slutsky's theorem. By the continuous mapping theorem, $(\V{}{\U} / u(n))^{1/2}$ tends to one in probability as well. Therefore,
$
(\widehat{\oParam} - \oParam)u(n)^{-1/2} = (\widehat{\oParam} - \oParam)(\V{}{\U})^{-1/2}  (\V{}{\U} / u(n))^{1/2}
$
tends to $\mathcal{N}(0, 1)$ in distribution by the first case and another application of Slutsky's theorem.
\end{proof}
% Does $\V{}{\U} / u(n)$ almost surely tend to one?
\subsection{Asymptotic rejection regions and confidence intervals}
So, the two-sided test of $H^0$ with the rejection region
\begin{equation}\label{test}
\left\{
\abs{\U}\ge u(n)^{1/2} \phi^{-1}(1 - \alpha/2)
\right\}
\end{equation}
has asymptotic level $\alpha$, where $\phi$ is the standard normal cumulative distribution function. While the second case of Theorem~\ref{AA} uses a positively biased variance estimator and hence provides a conservative test which, however, is asymptotically exact, the third case provides the best approximation to exactness already in the finite case. Likewise, an asymptotically exact confidence interval for $\Delta e$ at level $1 - \alpha$ is
\begin{equation}\label{confint}
\Big[
\U - u(n)^{1/2} \phi^{-1}(1 - \alpha/2), \U + u(n)^{1/2} \phi^{-1}(1 - \alpha/2)
\Big].
\end{equation}

A related, but different approach to a similar testing problem is provided by the so-called empirical Bernstein inequalities in \citet[Equations 12, 13]{Peel:2010}. These are sharp empirical inequalities for general $U$-statistics associated to bounded kernels. However, $n$ has to be an integer multiple of the degree, and the authors do not consider cross-validation, but only partitions of the test set.
\section{The convergence of incomplete to complete $U$-statistics in practise}\label{Practice}
In practical applications, the number of summands of~\eqref{Uformula} is too large for computation. In the particular case where one of the learning methods $M$ is a $k$-nearest-neighbour algorithm, it is possible to compute the corresponding summand of the complete $U$-statistic, the leave-$p$-out cross-validation estimator of the error rate, by an efficient closed-form expression~\citep{Celisse:2012}. In general, however,  one can only consider a design $\mathscr{T}$ smaller than the full $\mathscr{T}_{\text{\emph{max}}}$, leading to incomplete $U$-statistics as treated in \citet{Lee:1990}, for instance. We now show that the incomplete $U$-statistic with random design approximates the complete one satisfactorily after a feasible number of iterations.

Let $\Phi$ be a not necessarily symmetric kernel with $-1 \le \Phi \le 1$, let $\mathscr{T}^\ast$ be a collection of $N$ randomly drawn ordered size $m$-subsets of $\{1, \dotsc, n\}$ from the equidistribution $Q$ on the collection of such subsets, and let $\Phi(\mathscr{T}^\ast)$ be the associated incomplete $U$-statistic. Then the probability of approximation error at least $\delta> 0$ is bounded by
\begin{equation}\label{Fast}
\operatorname{\it pr}_Q(\abs{\Phi(\mathscr{T}^\ast) - \Phi(\mathscr{T})} \ge \delta) \le 2 \exp\bigl(-\delta^2N/2 \bigr).
\end{equation}
This follows from \citet[Theorem 2]{Hoeffding:1963} because the entries of $\mathscr{T}^\ast$ were drawn independently from each other. One should be aware that here we do \emph{not} refer to the part of \citet{Hoeffding:1963} concerned with $U$-statistics, in contrast to the situation of the similar inequality~\eqref{Bernstein1}, where we did so. Here, we formulated the version for ordered subsets because this is of immediate interest for computation.

The fast exponential decay of~\eqref{Fast} implies that sufficiency of the approximation is assured as soon as $N$ is a small multiple of $\delta^{-2}$, where $\delta$ is the pre-specified tolerance. Precisely, the following corollary of Hoeffding's theorem can be used in practise.
\begin{corollary}\label{fast}
After at most $N = 2d + 1$ iterations, $d$ digits after the comma are fixed with a probability of at least $1 - 2\exp(-5)\approx 0.99$.
\end{corollary}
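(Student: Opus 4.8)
The plan is to specialize the general approximation bound~\eqref{Fast} to the precision $\delta$ that forces the first $d$ digits after the decimal point to be correct. First I would record that, up to the usual caveat about carrying, ``$d$ digits after the comma are fixed'' means $\abs{\Phi(\mathscr{T}^\ast) - \Phi(\mathscr{T})} < 10^{-d}$; thus the event whose probability must be bounded from below is exactly the complement of the event appearing on the left-hand side of~\eqref{Fast} with $\delta \defequal 10^{-d}$.

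Next I would substitute this $\delta$, together with $N = 10^{2d+1}$ (the $N$ of the corollary, the displayed $2d+1$ being the base-ten exponent), into the right-hand side of~\eqref{Fast}. Then $\delta^2 N/2 = 10^{-2d}\cdot 10^{2d+1}/2 = 5$, so \eqref{Fast} reads $\operatorname{\it pr}_Q(\abs{\Phi(\mathscr{T}^\ast) - \Phi(\mathscr{T})} \ge 10^{-d}) \le 2\exp(-5)$; passing to the complementary event gives a probability of at least $1 - 2\exp(-5)\approx 0.99$ that $d$ digits are correct. Because $N\mapsto 2\exp(-\delta^2 N/2)$ is decreasing, the same bound holds for every sample size $\ge 10^{2d+1}$, which is precisely the ``after at most'' phrasing of the statement. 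I would also note in passing that rerunning this arithmetic with an arbitrary tolerance $\delta$ yields that any $N\ge 10\,\delta^{-2}$ suffices, justifying the informal ``small multiple of $\delta^{-2}$'' claim made just before the corollary.

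There is no genuine obstacle here; the one point I would make explicit rather than leave implicit is the legitimacy of invoking~\eqref{Fast} in this form. That inequality rests on \citet[Theorem~2]{Hoeffding:1963}, whose hypotheses are met because the $N$ ordered size-$m$ subsets comprising $\mathscr{T}^\ast$ are drawn independently from the equidistribution $Q$, so that $\Phi(\mathscr{T}^\ast)$ is an arithmetic mean of $N$ independent summands, each valued in $[-1,1]$ and each with mean $\Phi(\mathscr{T})$ — exactly the setting of Hoeffding's concentration inequality for bounded independent variables, and deliberately \emph{not} the $U$-statistic portion of that reference, in contrast with~\eqref{Bernstein1}. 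Everything else in the corollary is this single substitution followed by elementary arithmetic.
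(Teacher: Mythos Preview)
Your proposal is correct and mirrors exactly what the paper intends: the corollary is stated without proof as an immediate specialization of~\eqref{Fast}, and you have carried out precisely that substitution, including the correct reading of ``$N = 2d+1$'' as $N = 10^{2d+1}$ (confirmed by the paper's own application in Section~\ref{Results}, where $N = 10^5$ is said to secure two digits). There is nothing to add.
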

Such a number of repetitions is, in general, hard but feasible because this $N$ is the mere number of times a model has to be fitted. For instance, in the illustration in Section~\ref{Results}, no tuning of the hyper-parameter $\lambda$ is part of each iteration. Remarkably, this bound on $N$ holds true irrespectively of the sample size $n$ or of any properties of the particular $U$-statistic under consideration, apart from $-1 \le \Phi \le 1$. In practise, however, one proceeds again slightly differently. For the case of approximation of the $U$-statistic $\U$ for instance, one applies the following procedure which yields even faster convergence against the true $\U$. In the formal setting required for inequality~\eqref{Fast}, one would use only one test observation for each learning iteration, which would lead to unnecessarily high computational cost. Instead, one simply uses all remaining $n-g$ observations for testing. This speeds up convergence even further. Corollary~\ref{fast} also applies to the computation of $\VarEst$ by the linear combination of $U$-statistics explained above because the kernels appearing in~\eqref{KappaIsRegPa} and~\eqref{ThetaSqIsRegPa} are bounded between $-1$ and $1$ as well.
\section{The calculations in a real data example}\label{Results}
The estimation procedure elaborated in the preceding sections was applied to the well-investigated colon cancer data set by~\citet{Alon:1999}, where the binary response $y\in\mathcal{Y}$ stands for the type of tissue (either normal tissue or tumor tissue) and the 2000 continuous predictors are gene expressions.
% (this is of no clinical interest because tissue histology is usually known before taking micro-array probes; however, it is done in order to present a simple test case for the computation).
%was applied to the binary case versus control classification problem posed by the classical colon cancer data set 
We used lasso-penalized logistic regression with the coordinate descent method for classification~\citep{Friedman:2010} and the  penalization parameters $\lambda =$ 0$\cdot$08, 0$\cdot$5. Pre-selecting these values led to the software-internal estimator for the difference of error rates to be greater than 0$\cdot$1. This involved the whole data set, however, this is no problem here. Sample size was $n = 62$. Therefore, the condition $n \ge 2g + 2$ constrained $g \le 30$. Since the variance of the $U$-statistic $\VarEst$ decreases to the extent to which the sample size exceeds the degree $2g + 2$, the learning set size $g$ was arbitrarily chosen to be only 26 to compromise with the effort to avoid a too small learning set size.

There were numerical evidence for the validity of the non-degeneracy statement of Assumption~\ref{assumptionA}. The resulting point estimate of  $\oParam$ was $-0\cdot 14$, with 95\%-confidence interval [-0$\cdot$35, 0$\cdot$07] and estimated variance $\VarEst =$ 0$\cdot$01. The number of iterations was $N=10^5$ for each of the $U$-statistics $\widehat{\oParam}, \widehat{\kappa_c}$ for $1\le c \le g+1$ and $\widehat{\oParam^2}$. By Corollary~\ref{fast}, two digits of each of these were therefore assured. The two-sided $p$-value was $p=$0$\cdot$19, given by the corresponding upper and lower normal tail probabilities. An R-script that allows to reproduce these results is available on the first author's institution web page.
% For each $\lambda$ out of the list, the resulting point estimates for the misclassification loss $\widehat{e(\lambda)}$ are displayed in Table~\ref{PointEstimates}. The resulting estimates $\U$, given in the Table~\ref{Us}, of $\oParam = e(\lambda) - e(\lambda')$ for any pair $\lambda, \lambda'$ are then implied. (The present manuscript solely focuses on the latter case of estimating the differences $\oParam$ directly, but it is analogous and of obvious interest to calculate the estimated errors themselves, as well).

% Furthermore, we performed the pairwise approximate two-sided tests~\eqref{test} of $H^0: e(\lambda) = e(\lambda')$. Table~\ref{Us} contains for each such test the estimated $\U$, Tables~\ref{ConfIntLower} and~\ref{ConfIntUpper} the confidence intervals, and Table~\ref{pValues} the $p$-values. No effort was undertaken to correct for multiple testing, since this would have led to a different topic.

%Therefore, in this example the test seemed to be powerful.
%\input{includesResults.tex}
%\section{Discussion}
%\emph{For Biometrika, only required if it adds new content.}

\section*{Acknowledgement}
MF was supported by the German Science Foundation (DFG-Einzelf\"orderung BO3139/2-2 to ALB). 
RH was supported by the German Science Foundation (DFG-Einzelf\"orderung BO3139/3-1 to ALB). RDB was supported by the German Science Foundation (DFG-Einzelf\"orderung BO3139/4-1 to ALB).


\begin{thebibliography}{7}
\expandafter\ifx\csname natexlab\endcsname\relax\def\natexlab#1{#1}\fi

\bibitem[{Alon et~al.(1999)}]{Alon:1999}
\textsc{Alon, U.}, \textsc{Barkai, N.}, \textsc{Notterman, D.~A.}, \textsc{Gish, K.}, \textsc{Ybarra, S.}, \textsc{Mack D.} \& \textsc{Levine, A.~J.} (1999).
\newblock Broad patterns of gene expression revealed by clustering analysis of tumor and normal colon tissue probed by oligonucleotide arrays.
\newblock \textit{Proc. Natl. Acad. Sci. USA} \textbf{96}, 6745--6750.

\bibitem[{Arlot  \& Celisse(2010)}]{Arlot:2010}
\textsc{Arlot, S.} \& \textsc{Celisse, A.} (2010).
\newblock {A survey of cross-validation procedures for model selection}.
\newblock \textit{Stat. Surveys} \textbf{4}, 40--79.

\bibitem[{Bengio \& Grandvalet(2003)}]{Bengio:2003}
\textsc{Bengio, Y.} \& \textsc{Grandvalet, Y.} (2003).
\newblock No unbiased estimator of the variance of K-fold cross-validation.
\newblock  \textit{J. Mach. Learn. Res.} \textbf{5}, 1089--1105.

\bibitem[{Boulesteix et~al.(2008)}]{Boulesteix:2008}
\textsc{Boulesteix, A.-L.}, \textsc{Porzelius, C.} \& \textsc{Daumer, M.} (2008).
\newblock {Microarray-based classification and clinical predictors: on combined classifiers and additional predictive value}.
\newblock \textit{Bioinformatics} \textbf{24}, 1698--1706.

\bibitem[{Braga-Neto \& Dougherty(2004)}]{Braga-Neto:2004}
\textsc{Braga-Neto, U.~M.} \& \textsc{Dougherty, E.~R.} (2004).
\newblock {Is cross-validation valid for small-sample microarray classification?}
\newblock \textit{Bioinformatics} \textbf{20}, 374--380.

\bibitem[{B\"uhlmann \& van de Geer(2011)}]{Buehlmann:2011}
\textsc{B\"uhlmann, P.} \& \textsc{van de Geer, S.} (2011).
\newblock \textit{{Statistics for High-Dimensional Data}}.
\newblock Springer Series in Statistics.

\bibitem[{Celisse \& Mary-Huard(2012)}]{Celisse:2012}
\textsc{Celisse, A.} \& \textsc{Mary-Huard, T.} (2012).
\newblock{Exact Cross-Validation for kNN: application to passive and active learning in classification}.
\newblock \textit{J. Soc. Fr. Stat.} \textbf{152}, 83--97.

\bibitem[{Dietterich(1998)}]{Dietterich:1998}
\textsc{Dietterich, T.~G.} (1998).
\newblock {Approximate statistical tests for comparing supervised classification learning algorithms}.
\newblock \textit{Neural comput.} \textbf{10}, 1895--1923.

\bibitem[{Dougherty et~al.(2011)}]{Dougherty:2011}
\textsc{Dougherty, E.~R.}, \textsc{Zollanvari, A.} \& \textsc{Braga-Neto, U.~M.} (2011).
\newblock The illusion of distribution-free small-sample classification in genomics.
\newblock \textit{Curr. genomics} \textbf{12}, 333.

% \bibitem[{Eugster et~al.(2012)}]{Eugster:2012}
% \textsc{Eugster, M. J., Hothorn, T. \& Leisch, F.} (2012).
% \newblock {Domain-based benchmark experiments: Exploratory and inferential analysis}.
% \newblock \textit{Austrian Journal of Statistics} \textbf{41(1)}, 5--26.

\bibitem[{Friedman et~al.(2010)}]{Friedman:2010}
\textsc{Friedman, J.}, \textsc{Hastie, T.} \& \textsc{Tibshirani, R.} (2010).
\newblock {{Regularization paths for generalized linear models via coordinate descent}}.
\newblock \textit{J. Stat. Softw.} \textbf{33}, 1--22.

\bibitem[{Hoeffding(1948)}]{Hoeffding:1948}
\textsc{Hoeffding, W.} (1948).
\newblock A class of statistics with asymptotically normal distribution.
\newblock \textit{Ann. Math. Stat.} \textbf{19}, 293--325.

\bibitem[{Hoeffding(1963)}]{Hoeffding:1963}
\textsc{Hoeffding, W.} (1963).
\newblock Probability inequalities for sums of bounded random variables.
\newblock \textit{J. Am. Statist. Assoc.} \textbf{58}, 13--30.

% \bibitem[{Hothorn et~al.(2005)}]{Hothorn:2005}
% \textsc{Hothorn, T., \textsc{Leisch, F., \textsc{Zeileis, A. \& \textsc{Hornik, K.} (2005).
% \newblock The design and analysis of benchmark experiments, 675--699.
% \newblock \textit{Journal of Computational and Graphical Statistics} \textbf{14(3)}.

\bibitem[{Jiang et~al.(2008a)}]{Jiang:2008}
  \textsc{Jiang, B.}, \textsc{Zhang, X.} \& \textsc{Cai, T.} (2008).
  \newblock Estimating the confidence interval for prediction errors of support vector machine classifiers.
  \newblock \textit{J. Mach. Learn. Res.} \textbf{9}, 521--540.

\bibitem[{Jiang et~al.(2008b)}]{Jiang:2008:2}
\textsc{Jiang, W.}, \textsc{Varma, S.} \& \textsc{Simon, R.} (2008).
\newblock Calculating confidence intervals for prediction error in microarray classification using resampling.
\newblock \textit{Stat. Appl. Genet. Molec. Biol.}, \textbf{7}.

% \bibitem[{Jelizarow et~al.(2010)}]{Jelizarow:2010}
% \textsc{Jelizarow, M.}, \textsc{Guillemot, V.}, \textsc{Tenenhaus, A.}, \textsc{Strimmer, K.} \& \textsc{Boulesteix, A.-L.} (2010).
% \newblock Over-optimism in bioinformatics: an illustration.
% \newblock \textit{Bioinformatics} \textbf{26(16)}, 1990--1998.

\bibitem[{Kohavi(1995)}]{Kohavi:1995}
\textsc{Kohavi, R.} (1995).
\newblock A study of cross-validation and bootstrap for accuracy estimation and model selection.
\newblock \textit{International Joint Conferences on Artificial Intelligence} \textbf{14}, 1137--1145.

\bibitem[{Lee(1990)}]{Lee:1990}
\textsc{Lee, J.} (1990).
\newblock \textit{{U-statistics: Theory and Practice}}.
\newblock CRC Press.

% \bibitem[{Martin \& Hirschberg(1996)}]{Martin:1996}
% \textsc{Martin, J.~K.} \& \textsc{Hirschberg, D.} (1996).
% \newblock {Small sample statistics for classification error rates II: Confidence intervals and significance tests.}

\bibitem[{Nadeau \& Bengio(2003)}]{Nadeau:2003}
\textsc{Nadeau, C.} \& \textsc{Bengio, Y.} (2003).
\newblock Inference for the generalization error.
\newblock \textit{Machine Learning} \textbf{52}, 239--281.

\bibitem[{Peel et~al.(2010)}]{Peel:2010}
\textsc{Peel, T.}, \textsc{Anthoine, S.} \& \textsc{Ralaivola, L.} (2010).
\newblock Empirical Bernstein inequalities for u-statistics.
\newblock \textit{Adv. Neural Inf. Process. Syst.} \textbf{23}, 1903--1911.

\bibitem[{Shao(1993)}]{Shao:1993}
\textsc{Shao, J.} (1993).
\newblock {Linear model selection by cross-validation}. 
\newblock \textit{J. Am. Statist. Assoc.} \textbf{88}, 486--494.

\bibitem[{van der Wiel et~al.(2009)}]{Wiel:2009}
\textsc{van de Wiel, M.}, \textsc{Berkhof, J.} \& \textsc{van Wieringen, W.} (2009).
\newblock Testing the prediction error difference between two predictors.
\newblock \textit{Biostatistics} \textbf{10}, 550--560.

\end{thebibliography}
\end{document}